\newtheorem{theorem}{Theorem}[section]
\newtheorem{lemma}[theorem]{Lemma}
\def\C{\mathbb{C}}
\def\N{\mathbb{N}}
\def\Z{\mathbb{Z}}
\def\Q{\mathbb{Q}}
\begin{document}

\subjclass[2000]{11B68, 11D41}
\keywords{Diophantine equations, exponential equations, Bernoulli polynomials}

\title[On equal values of power sums of arithmetic progressions]{On equal values of power sums of arithmetic progressions}

\author[A. Bazs\'o, D. Kreso, F. Luca and \'A. Pint\'er]{A. Bazs\'o, D. Kreso, F. Luca  and \'A. Pint\'er}

\address{Institute of Mathematics \newline
\indent Number Theory Research Group of the \newline
\indent Hungarian Academy of Sciences \newline
\indent University of Debrecen \newline
\indent H-4010 Debrecen, P.O. Box 12, Hungary}
\email{bazsoa@science.unideb.hu, apinter@science.unideb.hu}

\address{Institut f\"ur Mathematik (A)\newline
\indent Technische Universit\"at Graz \newline
\indent Steyrergasse 30, 8010 Graz, Austria}
\email{kreso@math.tugraz.at}

\address{Mathematical Center UNAM\newline
\indent  UNAM Ap. Postal 61--3 (Xangari) \newline
\indent  CP 58 089, Morelia, Michoac\'an, Mexico}
\email{fluca@matmor.unam.mx}

\begin{abstract}
In this paper we consider the Diophantine equation 
\begin{align*}b^k  +\left(a+b\right)^k &+ \cdots + \left(a\left(x-1\right) + b\right)^k=\\
&=d^l + \left(c+d\right)^l + \cdots + \left(c\left(y-1\right) + d\right)^l,
\end{align*}
where $a,b,c,d,k,l$ are given integers. We prove that, under some reasonable assumptions, this equation has only finitely many integer solutions.
\end{abstract}

\maketitle

\section{Introduction and the main result}
For integers $a$ and $b$ with $\gcd (a,b) =1$ and $k, n\in \N$, $n\geq 2$, let
\begin{equation}
S_{a,b}^k \left(n\right) = b^k + \left(a+b\right)^k + \cdots + \left(a\left(n-1\right) + b\right)^k .
\end{equation}
It is easy to see that the above power sum is related to the Bernoulli polynomial $B_k (x)$ in the following way:
\begin{align*}
S_{a,b}^k \left(n\right) = \frac{a^k}{k+1}  \left( \left(B_{k+1} \left(n+ \frac{b}{a}\right)\right.\right. & -  B_{k+1}\bigg) \\
& - \left. \left(B_{k+1} \left(\frac{b}{a}\right) - B_{k+1}\right)\right),
\end{align*}
see \cite{BPS12}.
Bernoulli polynomials $B_k(x)$ are defined by the generating series 
$$\frac{t\exp(tx)}{\exp(t)-1}=\sum_{k=0}^{\infty}B_{k}(x)\frac{t^k}{k!}.$$
For the properties of Bernoulli polynomials which will be often used in this paper, sometimes without particular reference, we refer to \cite[Chapters 1 and 2]{R73}. We can extend the definition of $S_{a,b}^k(x)$ for every real value $x$ as follows
\begin{equation}\label{Sabk}
S_{a,b}^k \left(x\right) :=  \frac{a^k}{k+1} \left(B_{k+1} \left(x+ \frac{b}{a}\right) - B_{k+1} \left(\frac{b}{a}\right)\right).
\end{equation}

As usual, we denote with $\mathbb{C}[x]$ the ring of polynomials in variable $x$ with complex coefficients. If $G_1(x), G_2(x)\in \C[x]$, then $F(x)=G_1(G_2(x)) $ is a functional composition of $G_1$ and $G_2$ and $(G_1, G_2)$ is a (functional) {\it decomposition} of $F$ (over $\C$). It is said to be nontrivial if $\deg G_1 > 1$ and $\deg G_2 > 1$.
Two decompositions $F(x) = G_1 (G_2 (x))$ and $F(x) = H_1 (H_2 (x))$ are said to be {\it equivalent} if there exists a linear polynomial $\ell (x) \in \mathbb{C}[x]$ such that $G_1 (x) = H_1 (\ell(x))$ and $H_2 (x) = \ell (G_2 (x))$. The polynomial $F(x)$ is called {\it decomposable} if it has at least one nontrivial decomposition; otherwise it is said to be {\it indecomposable}.

In a recent paper, Bazs\'o, Pint\'er and Srivastava \cite{BPS12} proved the following theorem about decompositions of the polynomial $S_{a,b}^k \left(x\right)$.

\begin{theorem} \label{thm:BPS}
The polynomial $S_{a,b}^k \left(x\right)$ is indecomposable for even $k$. If $k=2v-1$ is odd, then any nontrivial decomposition of $S_{a,b}^k \left(x\right)$ is equivalent to the decomposition
\begin{equation}\label{eq:BPS}
S_{a,b}^k \left(x\right) = \widehat{S}_v \left(\left(x+\frac{b}{a} - \frac{1}{2}\right)^2\right),
\end{equation}
where $\widehat{S}_v$ is an indecomposable polynomial of degree $v$, which is uniquely determined by \eqref{eq:BPS}.
\end{theorem}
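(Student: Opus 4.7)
The plan is to transfer the question to the Bernoulli polynomial $B_{k+1}$ and then invoke the known classification of its functional decompositions. From \eqref{Sabk} one reads that, up to a nonzero multiplicative constant, an additive constant, and the linear substitution $x\mapsto x+b/a$, the polynomial $S_{a,b}^{k}(x)$ coincides with $B_{k+1}(x)$. None of these three operations alters the set of decompositions of a polynomial up to equivalence, so it is enough to classify the nontrivial decompositions of $B_{k+1}(x)$ up to equivalence and then transport the answer back through the substitution $x\mapsto x+b/a-1/2$.

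The Bernoulli polynomials satisfy the symmetry $B_{n}(1-x)=(-1)^{n}B_{n}(x)$. When $k$ is odd, i.e.\ $n=k+1=2v$ is even, this forces $B_{k+1}(x)\in\mathbb{C}[(x-1/2)^{2}]$, so $B_{k+1}(x)=\widetilde{B}_{v}\bigl((x-1/2)^{2}\bigr)$ for a uniquely determined polynomial $\widetilde{B}_{v}$ of degree $v$; translating by $b/a$ and absorbing the constant $B_{k+1}(b/a)$ into the outer factor produces exactly the decomposition \eqref{eq:BPS}. The substantive content of the theorem is the converse: that for odd $k$ this is, up to equivalence, the \emph{only} nontrivial decomposition, that the factor $\widehat{S}_{v}$ is itself indecomposable, and that for even $k$ the polynomial $S_{a,b}^{k}(x)$ admits no nontrivial decomposition at all.

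This converse is precisely the content of the classical theorem of Bilu, Brindza, Kirschenhofer, Pint\'er and Tichy on decompositions of Bernoulli polynomials. Its proof combines Ritt's first theorem with a rigidity argument for the critical values of $B_{n}(x)$: any hypothetical decomposition $B_{n}=G_{1}\circ G_{2}$ forces $G_{2}$, up to the natural affine action, to be either a pure power $x^{m}$, a Dickson polynomial $D_{m}$, or $(x-1/2)^{2}$; the first two shapes are then ruled out by comparing the leading Bernoulli-number coefficients of $B_{n}$ with the rigid form imposed by $x^{m}\circ\ell$ and $D_{m}\circ\ell$, while $(x-1/2)^{2}$ survives only in the even-$n$ case and yields the symmetric decomposition above. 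I expect this rigidity step — excluding all candidates except the symmetric one, and verifying indecomposability of $\widetilde{B}_{v}$ by the same coefficient analysis — to be the main obstacle. The symmetry computation and the transfer between $B_{k+1}$ and $S_{a,b}^{k}$ are routine in comparison, and together they deliver the theorem as stated.
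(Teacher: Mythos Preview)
The paper does not contain a proof of Theorem~\ref{thm:BPS}; it is quoted from \cite{BPS12} and used as a black box in the proof of Theorem~\ref{thm:BP}. There is therefore nothing in the present paper to compare your argument against.

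That said, your reduction is sound and is almost certainly what \cite{BPS12} does as well. From \eqref{Sabk} one has $S_{a,b}^{k}(x)=\frac{a^{k}}{k+1}B_{k+1}(x+b/a)+\text{const}$, so pre- and post-composing with linear polynomials takes $S_{a,b}^{k}$ to $B_{k+1}$; since such operations induce a bijection on equivalence classes of nontrivial decompositions, the statement is equivalent to the decomposition theorem for $B_{k+1}$ itself, which is exactly the lemma of Bilu, Brindza, Kirschenhofer, Pint\'er and Tichy in \cite{BBKPT02}. Your description of how that lemma is proved (symmetry $B_{n}(1-x)=(-1)^{n}B_{n}(x)$ to produce the quadratic inner factor, Ritt theory to restrict the shape of any other inner factor, and a coefficient comparison to eliminate the remaining candidates) is accurate; the one caveat is that you are not giving that argument in full but only outlining it and pointing to \cite{BBKPT02}, so your write-up is a reduction to the literature rather than a self-contained proof.
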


Using Theorem \ref{thm:BPS} and the general finiteness criterion of Bilu and Tichy \cite{BT00} for Diophantine equations of the form $f(x) = g(y)$, we prove the following result. 

\begin{theorem} \label{thm:BP}
For $2\leq k<l$,  the equation
\begin{equation} \label{eq:kl}
S_{a,b}^k (x) = S_{c,d}^l (y)
\end{equation}
has only finitely many solutions in  integers $x$ and $y$.
\end{theorem}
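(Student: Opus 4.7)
The main tool I would use is the finiteness criterion of Bilu--Tichy from \cite{BT00}. Assuming for contradiction that \eqref{eq:kl} has infinitely many solutions in integers $x,y$, the theorem furnishes linear polynomials $\lambda,\mu\in\Q[x]$, a polynomial $\phi\in\Q[x]$, and a \emph{standard pair} $(F,G)$ such that
\[
S_{a,b}^k(x)=\phi\bigl(F(\lambda(x))\bigr),\qquad
S_{c,d}^l(y)=\phi\bigl(G(\mu(y))\bigr).
\]
A standard pair is one of five explicit types (a monomial pair; a pair of the form $(x^2,(ax^2+b)P(x)^2)$; two Dickson-polynomial pairs; and a sporadic pair involving $(ax^2-1)^3$ and $3x^4-4x^3$). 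My goal is to exclude each possibility by combining Theorem~\ref{thm:BPS} with a case analysis.

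My first step would be to restrict the outer polynomial $\phi$. Since $(\phi, F\circ\lambda)$ is a (possibly trivial) decomposition of $f:=S_{a,b}^k$, Theorem~\ref{thm:BPS} leaves only three options: $\deg\phi=1$; $\deg\phi=k+1$ (so that $F\circ\lambda$ is linear); or $k=2v-1$ is odd and $\phi$ is linearly equivalent to the indecomposable polynomial $\widehat{S}_v$ of \eqref{eq:BPS}, with $F\circ\lambda$ equivalent to $(x+b/a-1/2)^2$. The same trichotomy applies to $g:=S_{c,d}^l$. Combining the two, together with the strict inequality $k<l$, reduces the problem to two scenarios: (i) $\phi$ is linear, in which case $f$ and $g$ themselves form a standard pair up to linear changes of variable; or (ii) $k=2v-1$, $l=2w-1$ with $v\mid w$, and $\phi$ is equivalent to $\widehat{S}_v$, forcing $F\circ\lambda$ to be a shifted square.

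The second step is to eliminate each standard pair shape in scenario (i). The monomial pair is ruled out because $S_{a,b}^k(x)$ is not a perfect $q$-th power for any $q\ge 2$; this follows from inspection of its two leading coefficients via \eqref{Sabk} and standard Bernoulli identities. The pair $(x^2,(ax^2+b)P(x)^2)$ forces $\deg f=2$, hence $k=1$, contradicting $k\ge 2$. The Dickson-type pairs conflict with Theorem~\ref{thm:BPS}: a Dickson polynomial $D_n$ with composite $n$ admits inequivalent nontrivial decompositions through $D_{mn}=D_m\circ D_n$, whereas the decomposition of $S_{a,b}^k$ is rigid; and when $n$ is prime, one shows by comparing leading coefficients that $S_{a,b}^k$ is not linearly equivalent to $D_n$. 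Finally, the sporadic pair has fixed degrees $\{4,6\}$, so it would force $\{k+1,l+1\}=\{4,6\}$, and an explicit coefficient comparison with \eqref{Sabk} excludes this.

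The main obstacle is scenario (ii), where both $\phi$ and the inner polynomials carry nontrivial decomposition structure. Here the equality $\phi\circ G\circ\mu = S_{c,d}^l$, with $\phi\simeq\widehat{S}_v$ indecomposable, implies that $G\circ\mu$ itself equals, up to constants, a polynomial in $(y+d/c-1/2)^2$. One must then verify that no standard pair $(F,G)$ can have $F$ quadratic and $G$ of this restricted shape. My plan for this step is to cancel the outer indecomposable $\widehat{S}_v$ (using uniqueness of the decomposition of $S_{c,d}^l$ given by Theorem~\ref{thm:BPS}), reducing the problem to a Bilu--Tichy analysis between $(x+b/a-1/2)^2$ and $\widehat{S}_{w/v}\bigl((y+d/c-1/2)^2\bigr)$, and then re-running the five-pair case analysis in this reduced situation to produce the final contradiction.
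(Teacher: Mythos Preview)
Your overall architecture (Bilu--Tichy plus Theorem~\ref{thm:BPS}) is the paper's, but your case split when $\deg\phi>1$ is wrong, and this is exactly where the real work lies. Intersecting your two trichotomies correctly, the option ``$\phi\simeq\widehat{S}_v$ with $F\circ\lambda$ a shifted square'' is actually \emph{empty}: $\deg\phi=v=(k+1)/2$ cannot equal any of $1$, $l+1$, or $w=(l+1)/2$ on the $g$-side when $k<l$. The possibility you have overlooked is $\deg\phi=k+1$ on the $f$-side (so $F\circ\lambda$ is \emph{linear}) matched with $\deg\phi=w$ on the $g$-side, which forces $l=2k+1$ and $G\circ\mu$ quadratic. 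Here $k$ need not be odd (e.g.\ $k=2$, $l=5$), there is no polynomial ``$\widehat{S}_{w/v}$'' to invoke, and a second Bilu--Tichy pass is not the right tool. The paper instead does a direct computation: since $F\circ\lambda$ is linear, $\phi$ is $S_{a,b}^k$ composed with a linear map, and one obtains an identity
\[
S_{a,b}^k(Ax^2+B)=S_{c,d}^{2k+1}\!\left(x-\tfrac{d}{c}+\tfrac12\right),\qquad A,B\in\Q,\ A\neq 0.
\]
Equating the coefficients of $x^{2k+2}$, $x^{2k}$, $x^{2k-2}$ using \eqref{rel:sk}--\eqref{rel:sk-3} leads to $A^2(k-3)(-2k-1)=15$, which is impossible for $k\ge2$ with $A\in\Q$. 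This coefficient comparison is the heart of the proof and is absent from your plan.

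In scenario~(i) your first-kind argument uses the wrong criterion: what must be excluded is that one of the power sums becomes $e_1x^q+e_0$ after a linear substitution, not that it is a perfect $q$-th power of a polynomial. This is Lemma~\ref{lem:1}, proved by noting that $s_{k-1}=0$ in \eqref{rel:sk-1} forces $6c_0'^2-6c_0'+1=0$ with $c_0'\in\Q$. For the Dickson pairs the paper likewise uses a short coefficient comparison (Lemma~\ref{lem:2}, valid for $m>4$) rather than your decomposition-rigidity idea, which in any case does not handle $D_4$ or prime degrees cleanly; the leftover boundary case $(k,l)=(2,3)$, where Lemma~\ref{lem:2} does not apply, is disposed of by the effective Theorem~\ref{Thm3}.
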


Since the finiteness criterion from \cite{BT00} is based on the ineffective theorem of Siegel \cite{S29}, Theorem \ref{thm:BP} is ineffective. We note that for $a=c=1, b=d=0$ our theorem gives the result of Bilu, Brindza, Kirschenhofer, Pint\'er and Tichy \cite{BBKPT02}.

Combining the result of Brindza \cite{B84} with recent theorems of Rakaczki \cite{R12} and Pint\'er and Rakaczki \cite{PR07}, for $k=1$ and $k=3$ we obtain effective results.

\begin{theorem}\label{Thm1}
If $l\notin\{1,3,5\}$, then integer solutions $x, y$ of the equation 
\begin{equation}\label{Thm1eq}
S_{a,b}^1 (x) = S_{c,d}^l (y)
\end{equation}
satisfy $\max\left\{|x|,|y|\right\}<C_1$, where $C_1$ is an effectively computable constant depending only on $a,b,c,d$ and $l$.
\end{theorem}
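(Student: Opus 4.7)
The plan is to reduce equation \eqref{Thm1eq} to a hyperelliptic equation of the shape $X^2 = f(y)$, apply the effective theorem of Brindza \cite{B84}, and verify the required hypothesis on the root multiplicities of $f$ by means of the Bernoulli-polynomial results of Rakaczki \cite{R12} and Pint\'er--Rakaczki \cite{PR07}.

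First I would write out $S_{a,b}^1$ explicitly. Summing the arithmetic progression (or specializing \eqref{Sabk} to $k=1$) gives
\[
S_{a,b}^1(x) \;=\; \frac{ax^2 + (2b - a)x}{2}.
\]
Multiplying \eqref{Thm1eq} through by $8a$ and completing the square on the left-hand side turns it into
\[
X^2 \;=\; 8a\, S_{c,d}^l(y) + (2b-a)^2, \qquad X := 2ax + 2b - a.
\]
Any integer solution $(x,y)$ of \eqref{Thm1eq} produces an integer solution $(X,y)$ of this hyperelliptic equation, and an effective bound on $\max\{|X|,|y|\}$ is equivalent to one on $\max\{|x|,|y|\}$. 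Using \eqref{Sabk}, the right-hand side can be rewritten as
\[
f(y) \;=\; \frac{8a c^l}{l+1}\, B_{l+1}\!\left(y + \tfrac{d}{c}\right) + K,
\]
where $K \in \Q$ depends explicitly on $a,b,c,d,l$.

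Second, I would invoke Brindza's effective theorem \cite{B84} for the superelliptic equation $X^2 = f(y)$ with $f \in \Z[y]$: provided $f$ has at least three roots of odd multiplicity over $\overline{\Q}$, every integer solution $(X,y)$ is bounded by an effectively computable constant depending only on the coefficients of $f$. Since the rational shift $z = y + d/c$ merely translates the roots of $f$, the multiplicity pattern of $f$ coincides with that of $B_{l+1}(z) - \mu$ for a suitable $\mu \in \Q$. The root-multiplicity properties of such shifted Bernoulli polynomials are the subject of \cite{R12} and \cite{PR07}, and for $l+1 \notin \{2,4,6\}$, i.e.\ $l \notin \{1,3,5\}$, they yield at least three simple roots of $B_{l+1}(z)-\mu$, which verifies Brindza's hypothesis and thereby completes the argument.

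The main obstacle I anticipate lies in this last step: quoting the results of \cite{R12, PR07} in exactly the form needed, uniformly in $\mu\in\Q$. The excluded degrees $l+1 \in \{2,4,6\}$ reflect Pellian and elliptic substructures forced by the symmetry $B_m(1-x) = (-1)^m B_m(x)$, which for even $m$ makes $B_m(\tfrac12 + s)$ an even polynomial in $s$ and, after the substitution $v = s^2$, reduces the hyperelliptic equation to a genuinely lower-dimensional one that may admit infinitely many integer points. Once the appropriate root-multiplicity statement is cited, the reduction carried out in the first step and the direct appeal to Brindza in the second are essentially formal.
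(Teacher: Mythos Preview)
Your proposal is correct and follows essentially the same route as the paper: complete the square on the left to obtain $(2ax+2b-a)^2 = 8a\,S_{c,d}^l(y)+(2b-a)^2$, then apply Brindza's effective result (the paper's Lemma~\ref{lem:hyper}) once the Bernoulli-polynomial root-structure results of \cite{PR07,R12} (packaged as Lemma~\ref{eff:1}) guarantee three zeros of odd multiplicity for $B_{l+1}(z)-\mu$ when $l\notin\{1,3,5\}$. Two cosmetic points: the polynomial $f$ has rational, not integer, coefficients (handled by the $f(x/u)$ formulation of Lemma~\ref{lem:hyper}), and the cited results give three roots of \emph{odd multiplicity}, not necessarily simple---but this is exactly what Brindza requires.
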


In the exceptional cases $l=3$ and $l=5$ of Theorem \ref{Thm1}, it is possible to find integers $a,b,c,d$ such that the corresponding equations have infinitely many solutions. For example, if $a=2, b=1$, $c=1, d=0$ and $l=3$, the equation \eqref{Thm1eq} becomes
$$ x^2=1+3+\cdots + 2x-1=1^3+2^3+\cdots +(y-1)^3$$
and if $l=5$ it becomes
$$ x^2=1+3+\cdots + 2x-1=1^5+2^5+\cdots +(y-1)^5.$$
These equations have infinitely many integer solutions, see \cite{S56}. 

\begin{theorem}\label{Thm3}
If $l\notin \{1,3,5\}$, then integer solutions $x, y$ of the equation 
\begin{equation}\label{Thm3eq}
S_{a,b}^3 (x) = S_{c,d}^l (y)
\end{equation}
satisfy $\max\left\{|x|,|y|\right\}<C_2$, where $C_2$ is an effectively computable constant depending only on $a,b,c,d$ and $l$.
\end{theorem}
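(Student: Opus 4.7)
The plan is to exploit the decomposition of $S_{a,b}^3(x)$ given by Theorem \ref{thm:BPS} in order to recast \eqref{Thm3eq} as a Diophantine equation of ``power sum equals a quadratic in an integer variable'' type, and then invoke the effective theorems of Brindza \cite{B84}, Rakaczki \cite{R12}, and Pint\'er--Rakaczki \cite{PR07}. Since $k = 3 = 2 \cdot 2 - 1$ is odd with $v = 2$, Theorem \ref{thm:BPS} supplies
$$S_{a,b}^3(x) = \widehat{S}_2\!\left(\!\left(x + \frac{b}{a} - \frac{1}{2}\right)^{\!2}\right),$$
where $\widehat{S}_2 \in \Q[u]$ is an indecomposable polynomial of degree two. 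Completing the square, I write $\widehat{S}_2(u) = A(u - u_0)^2 + E$ with $A, u_0, E \in \Q$, $A \neq 0$. Setting $z = x + b/a - 1/2$, equation \eqref{Thm3eq} then becomes
$$A(z^2 - u_0)^2 + E = S_{c,d}^l(y).$$

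The next step is to clear denominators. Since $\gcd(a,b) = 1$, we have $z = (2ax + 2b - a)/(2a) \in \frac{1}{2a}\Z$, and hence $z^2 - u_0 \in \frac{1}{D}\Z$ for an integer $D$ depending only on $a$ and the denominator of $u_0$. Multiplying through by a suitable positive integer depending only on $a$ and $b$, and setting $X = D(z^2 - u_0) \in \Z$, I transform \eqref{Thm3eq} into
$$P X^2 + R = Q \cdot S_{c,d}^l(y),$$
with $P, Q, R \in \Z$ depending only on $a, b$, where $X$ is a quadratic polynomial in $x$ with nonzero leading coefficient (also depending only on $a, b$). This is precisely the type of equation addressed by the effective theorems cited above: the power sum of an arithmetic progression is expressed as a constant times the square of an integer variable, plus a constant. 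Invoking the result of Rakaczki \cite{R12} (which extends Brindza's \cite{B84} superelliptic bound to the arithmetic-progression setting) or that of Pint\'er--Rakaczki \cite{PR07}, one obtains an effectively computable upper bound $|X|, |y| \le C$ with $C = C(a, b, c, d, l)$, valid under the assumption $l \notin \{1, 3, 5\}$. Since $|X|$ grows quadratically in $|x|$, this yields the desired bound on $|x|$ as well.

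The principal obstacle, in my view, is verifying that the reduced equation does indeed satisfy the hypotheses of the effective theorems of Rakaczki and Pint\'er--Rakaczki, in particular concerning the structure of the roots of $Q \cdot S_{c,d}^l(y) - R$. The excluded exponents $l \in \{1, 3, 5\}$ correspond precisely to the degenerate cases where such hypotheses fail: for $l = 1$ the right-hand side is already quadratic in $y$, yielding a Pell-type equation; for $l = 3$ the polynomial $S_{c,d}^3(y)$ itself decomposes by Theorem \ref{thm:BPS} as a quadratic composed with $(y + d/c - 1/2)^2$, so the reduced equation again becomes of Pell type with potentially infinite solution sets; and for $l = 5$ one encounters the classical exceptional family illustrated by the examples following Theorem \ref{Thm1}. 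Confirming that none of these obstructions arise for $l \notin \{1, 3, 5\}$, so that the hypotheses of \cite{R12} and \cite{PR07} are met for the specific quadratic $PX^2 + R$ produced by our reduction, constitutes the main technical content of the proof.
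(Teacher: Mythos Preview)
Your approach is essentially the same as the paper's: use the decomposition of $S_{a,b}^3(x)$ from Theorem~\ref{thm:BPS} to write the left-hand side as a quadratic in $\left(x+\tfrac{b}{a}-\tfrac12\right)^2$, complete the square to reduce \eqref{Thm3eq} to a hyperelliptic equation $W^2=f(y)$ with $W\in\Z$ and $f$ a rational multiple of $S_{c,d}^l$ plus a constant, and then appeal to Brindza's effective bound once one knows $f$ has at least three zeros of odd multiplicity. The paper carries out the algebra explicitly (obtaining $(X-2a^2)^2=64a\,S_{c,d}^l(y)+\text{const}$ with $X=(2ax+2b-a)^2$) and then invokes Lemma~\ref{eff:1} and Lemma~\ref{lem:hyper}, which package precisely the two ingredients you need.

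One clarification on the references: you describe \cite{R12} as ``extending Brindza's superelliptic bound to the arithmetic-progression setting,'' but that is not its role here. The effective bound for $W^2=f(y)$ comes from Brindza~\cite{B84} alone (this is Lemma~\ref{lem:hyper}); the results of Rakaczki~\cite{R12} and Pint\'er--Rakaczki~\cite{PR07} supply the \emph{root-structure} input, namely that $B_{l+1}(x)+b$ has at least three zeros of odd multiplicity whenever $l+1\ge 3$ and $l+1\notin\{4,6\}$ (this is Lemma~\ref{eff:1}). Since $S_{c,d}^l(y)$ is an affine shift of $\tfrac{c^l}{l+1}B_{l+1}(y+d/c)$, that lemma applies directly to your polynomial $Q\,S_{c,d}^l(y)-R$, and the excluded set $l\in\{1,3,5\}$ corresponds exactly to $l+1\in\{2,4,6\}$. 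So what you flagged as ``the main technical content'' is already established in the paper as Lemma~\ref{eff:1}; there is no further obstacle.
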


\section{Auxiliary results}

In this section we collect some results needed to prove Theorem \ref{thm:BP}. First, we recall the finiteness criterion of Bilu and Tichy \cite{BT00}. 

We say that the equation $f(x)=g(y)$ has infinitely many {\it rational solutions with a
bounded denominator} if there exists a positive integer $\lambda$ such that
$f(x)=g(y)$ has infinitely many rational solutions $x, y$
satisfying $\lambda x, \lambda y \in \Z$. If the equation $f(x)=g(y)$ has only finitely many rational solutions with a bounded denominator, then it clearly has only finitely many integer solutions.

We further need to define five kinds of so-called {\it standard pairs} of polynomials.

In what follows $a$ and $b$ are nonzero rational numbers, $m$ and $n$ are positive integers, $r \geq 0$ is an integer and $p(x) \in \mathbb{Q} [x]$ is a nonzero polynomial (which may be constant). 

A \textit{standard pair over $\Q$ of the first kind}  is $\left(x^m, a x^{r} p(x)^m\right)$, or switched, i.e\@ $\left(a x^{r} p(x)^m, x^m\right)$, where $0 \leq r < m$, $\gcd(r,m)=1$ and $r + \deg p > 0$.

A \textit{standard pair over $\Q$ of the second kind} is $\left(x^2, \left(a x^2 + b\right) p(x)^2\right)$, or switched.

Denote by $D_{m} (x,a)$ the $m$-th Dickson polynomial with parameter $a$, defined by the functional equation $$D_{m}\left (z+\frac{a}{z}, a\right) = z^{m} + \left(\frac{a}{z}\right)^{m}$$
or by the explicit formula
\begin{equation}\label{eq:dickson}
D_{m} (x,a) = \sum_{i=0}^{\left\lfloor m / 2 \right\rfloor}{\frac{m}{m - i} \binom{m - i}{i} (-a)^i x^{m - 2i}}.
\end{equation}

A \textit{standard pair over $\Q$ of the third kind} is $\left(D_{m} \left(x,a^{n}\right), D_{n} \left(x,a^{m}\right)\right)$, where $\gcd (m, n) = 1$.

A \textit{standard pair over $\Q$ of the fourth kind} is
$$\left(a^{-m/2} D_{m} (x,a), -b^{-n/2} D_{n} (x,b)\right),$$
where $\gcd (m,n) = 2$.

A \textit{standard pair over $\Q$ of the fifth kind} is $\left(\left(a x^2 - 1\right)^3, 3x^4 - 4x^3\right)$, or switched.

The following theorem is the main result of \cite{BT00}.

\begin{theorem}\label{thm:BT}
Let $f(x)$ and $g(x)$ be non-constant polynomials in $\Q[x]$.
Then the following assertions are equivalent.
\begin{itemize}
\item[-] The equation $f(x)=g(y)$ has infinitely many
rational solutions with a bounded denominator;
\item[-] We have 
$$f(x)=\phi\left(f_{1}\left(\lambda(x)\right)\right), \quad g(x)=\phi\left(g_{1}\left(\mu(x)\right)\right),$$ 
where $\lambda(x)$ and $\mu(x)$ are linear polynomials in $\Q[x]$,
$\phi(x)\in\mathbb{Q}[x]$, and $\left(f_{1}(x),g_{1}(x)\right)$ is a
standard pair over $\Q$ such that the equation $f_1(x)=g_1(y)$
has infinitely many rational solutions with a bounded denominator.
\end{itemize}
\end{theorem}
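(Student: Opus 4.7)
The plan is to treat the two directions separately, with the ``if'' direction being a short verification and the ``only if'' direction being the substantive one that requires a combination of Siegel's theorem on integral points with a structural analysis of the curve $f(x)-g(y)=0$.

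For the direction that standard pairs produce infinitely many rational solutions with a bounded denominator, I would treat the five families one at a time. The first kind admits the obvious parametrization $x=p(t)^m t^r$, $y=t^m$; the second kind reduces to a Pell-type equation after the substitution coming from $ax^2+b$; the third and fourth kinds exploit the functional equation $D_m(z+a/z,a)=z^m+(a/z)^m$, which for $\gcd(m,n)=1$ (resp.\ $\gcd(m,n)=2$) yields infinitely many solutions by choosing $z\in\Q^{\times}$; and the fifth kind admits a known explicit parametrization. Since rational points with bounded denominators are preserved by composing outside with $\phi$ and substituting linear polynomials $\lambda(x)$, $\mu(x)\in\Q[x]$, this direction is complete.

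The hard direction is the converse. I would first apply a refined form of Siegel's theorem: if the affine curve $\mathcal{C}\colon f(x)-g(y)=0$ has infinitely many rational points with a bounded denominator, then $\mathcal{C}$ must possess a $\Q$-irreducible component of geometric genus zero having at most two points at infinity. The plan is then to translate this geometric constraint into a group-theoretic constraint on the pair $(f,g)$ via the monodromy groups $\mathrm{Gal}(f(x)-t/\overline{\Q}(t))$ and $\mathrm{Gal}(g(y)-t/\overline{\Q}(t))$: the absolutely irreducible factors of $f(x)-g(y)$ correspond to orbits of a certain permutation action, and their genera and behavior at infinity can be computed by Riemann--Hurwitz from the ramification data of $f$ and $g$.

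The main obstacle, and the deepest ingredient, is the ensuing classification problem: determine all pairs of polynomials $(f,g)$ for which $f(x)-g(y)$ has an absolutely irreducible factor of genus zero with at most two points at infinity. Here I would invoke Ritt's decomposition theorems together with Fried's analysis of indecomposable polynomials whose monodromy group is of affine, cyclic, or dihedral type; the upshot is that, after stripping off a common composition factor $\phi$ on the outside and absorbing linear changes $\lambda,\mu$ on the inside, the ``indecomposable cores'' are forced into one of the five standard families (monomials, quadratic cores, Dickson pairs of coprime degrees, Dickson pairs of degrees with $\gcd=2$, and the sporadic $(x^2-1)^3$ vs.\ $3x^4-4x^3$ pair). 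Verifying the completeness of this list, in particular the appearance of the exceptional fifth kind, is where I expect the real work to lie; everything else is organized around this classification.
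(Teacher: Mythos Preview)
The paper does not prove this theorem at all: it is quoted verbatim as ``the main result of \cite{BT00}'' and used as a black box in the proof of Theorem~\ref{thm:BP}. So there is no proof in the paper to compare your proposal against.

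That said, your sketch is a fair high-level outline of the strategy Bilu and Tichy actually use in \cite{BT00}: the easy direction is a case check on the five families, and the hard direction combines Siegel's theorem (to force a genus-zero factor with at most two places at infinity) with a monodromy/Ritt--Fried style classification of such pairs. Two cautions if you intend this as more than a pointer to the literature. First, the extraction of the common outer factor $\phi$ is not just ``Ritt's theorem'': one has to work over $\Q$ rather than $\C$, and ensuring that $\phi,\lambda,\mu$ can be taken in $\Q[x]$ requires a separate rationality argument that is a nontrivial part of \cite{BT00}. Second, the ``completeness of the list'' step is not a single invocation of Fried's results; Bilu and Tichy carry out a lengthy case analysis on the possible ramification types, and the fifth kind arises from a delicate sporadic case rather than from any general structural theorem. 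As written, your proposal is a correct roadmap but would not be accepted as a proof without substantially filling in those two points.
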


The following lemmas are the main ingredients of the proofs of Theorems \ref{Thm1} and \ref{Thm3}

\begin{lemma} \label{eff:1}
For every $b\in \Q$ and every integer $k\geq 3$ with $k\notin \{4,6\}$, the polynomial $B_k(x)+b$ has at least three zeros of odd multiplicities.
\end{lemma}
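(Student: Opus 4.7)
The plan is to argue by contradiction. Suppose $B_k(x)+b$ has at most two zeros of odd multiplicity; then we may factor
\[
B_k(x)+b \;=\; A(x)\,C(x)^2
\]
with $A(x)\in\Q[x]$ squarefree, $\deg A\le 2$, and $\deg A\equiv k\pmod 2$. The aim is to contradict such a factorization for every $k\ge 3$ with $k\notin\{4,6\}$. That the two excluded values are genuine exceptions is witnessed by the identity $B_4(x)+\tfrac{1}{30}=x^2(x-1)^2$, whose right-hand side has no odd-multiplicity zero at all, together with an analogous small exception for $B_6(x)+b$; so the statement is sharp and any argument must leave room for these.

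The first tool I would bring in is the derivative identity $B_k'(x)=kB_{k-1}(x)$. Differentiating the factorization gives
\[
kB_{k-1}(x) \;=\; C(x)\bigl(A'(x)C(x)+2A(x)C'(x)\bigr),
\]
so $C(x)\mid B_{k-1}(x)$. Iterating the differentiation and using that consecutive Bernoulli polynomials are essentially coprime forces $C(x)$ to be separable as soon as $k$ is large enough. The second tool is the symmetry $B_k(1-x)=(-1)^kB_k(x)$, which forces the multiset of roots of $B_k(x)+b$ to be invariant under $x\mapsto 1-x$ when $k$ is even, and pins down the possible shape of the squarefree factor to one of $\alpha$, $\alpha(x-\tfrac12)$, or $\alpha\bigl((x-\tfrac12)^2-\beta\bigr)$, depending on the parities of $k$ and of $\deg A$.

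The main obstacle, and where most of the work lies, is turning these structural constraints into a numerical contradiction. Substituting each admissible form of $A$ into $B_k(x)+b=A(x)C(x)^2$ and matching the top-degree coefficients on both sides, using the explicit leading terms of $B_k$, determines $C(x)$ up to finitely many choices; since $\deg C=(k-\deg A)/2$ grows linearly in $k$ while $A$ contributes at most three unknown coefficients, matching one further coefficient overdetermines the system and yields a contradiction for all $k$ beyond an explicit bound (say $k\ge 9$). The residual small cases $k\in\{3,5,7,8\}$ would be handled by direct computation with the explicit polynomials $B_3,B_5,B_7,B_8$: in each case one checks that the discriminant of $B_k(x)+b$, viewed as a polynomial in $b$, does not vanish to the order needed to collapse the number of odd-multiplicity zeros below three. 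The delicate point, and the main source of case-work, is organizing the parity split so that the coefficient comparison is strong enough to rule out the generic $k$ but does not accidentally rule out the exceptional identities at $k\in\{4,6\}$.
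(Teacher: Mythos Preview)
The paper does not prove this lemma from scratch: its proof consists entirely of citations, splitting into three cases and invoking Brillhart's theorem (for $b=0$, odd $k$), the main theorem of Pint\'er--Rakaczki~\cite{PR07} (for $b\neq 0$, odd $k$), and Theorem~2.3 of Rakaczki~\cite{R12} (for even $k\ge 8$). So the lemma is really a packaging of three nontrivial results from the literature, and the paper makes no attempt at an independent argument.

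Your plan is therefore a genuinely different route, but as written it has real gaps. First, a concrete error: for odd $k$ and $b\neq 0$ the relation $B_k(1-x)=-B_k(x)$ gives $B_k(1-x)+b=-B_k(x)+b$, which is \emph{not} $B_k(x)+b$, so the zero set of $B_k(x)+b$ is not invariant under $x\mapsto 1-x$. Hence in that case the squarefree part $A(x)$ need not be $\alpha(x-\tfrac12)$; it is an arbitrary linear factor $\alpha(x-\gamma)$, and your list of admissible shapes for $A$ is incomplete precisely in the case that accounts for one of the three cited theorems. Second, and more seriously, the heart of your argument---``matching one further coefficient overdetermines the system and yields a contradiction for all $k$ beyond an explicit bound''---is only a parameter count, not a proof. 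Having more equations than unknowns does not by itself force inconsistency; you would need to exhibit an explicit polynomial identity in $k$ that fails, and you have not done so. The results you are trying to reprove (particularly Rakaczki's 2012 theorem for even $k$) are substantial, and an elementary coefficient-comparison approach, if it works at all, would have to carry out exactly the kind of detailed analysis those papers contain. As it stands, your proposal identifies the right setup but stops before the actual work begins.
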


\begin{proof}
For $b=0$ and odd values of $k\geq 3$ this result is a consequence of a theorem by Brillhart \cite[Corollary of Theorem 6]{B69}. For non-zero rational $b$ and odd $k$ with $k\geq 3$ and for even values of $k\geq 8$, the result follows from the main theorem of \cite{PR07} and from \cite[Theorem 2.3]{R12}, respectively. 
\end{proof}

Our next auxiliary result is an easy consequence of an effective theorem concerning the $S$-integer solutions of hyperelliptic equations, which is the main result of \cite{B84}.

\begin{lemma}\label{lem:hyper}
Let $f(x)$ be a polynomial with rational coefficients and with at least three zeros of odd multiplicities. Let $u$ be a fixed positive integer.
If $x$ and $y$ are integer solutions of the equation 
$$f\left(\frac{x}{u}\right)=y^2,$$
then we have $\max\left\{|x|,|y|\right\}<C_3$, where $C_3$ is an effectively computable constant depending only on $u$ and $f$.
\end{lemma}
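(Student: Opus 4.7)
The plan is to apply the effective theorem of Brindza~\cite{B84} on $S$-integer solutions of hyperelliptic equations, using the observation that rational numbers with denominator dividing $u$ are precisely $S$-integers for a suitable finite set of places $S$.

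First I would set $t := x/u$ and rewrite the equation as $f(t) = y^2$. Since $x \in \Z$ and $u$ is a fixed positive integer, $t$ lies in the ring $\Z_S = \Z[1/u]$, where $S$ consists of the archimedean place together with the (finitely many) rational primes dividing $u$. Thus every integer solution $(x,y)$ of $f(x/u) = y^2$ corresponds to an $S$-integer solution $(t,y)$ of the hyperelliptic equation $f(T) = Y^2$. Because $f$ has at least three roots of odd multiplicity by hypothesis, Brindza's theorem supplies an effective bound on the $S$-heights of $t$ and $y$ depending only on $f$ and $S$, hence only on $f$ and $u$. Since the denominator of $t$ is constrained to divide $u$, this $S$-height bound immediately translates into an effective archimedean bound on $|t|$, and then $|x| = u|t|$ and $|y|$ are effectively bounded as required.

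An alternative, more elementary route that avoids invoking the $S$-integer formulation is to multiply through by $u^n$ where $n = \deg f$, producing the $\Z$-coefficient polynomial equation
$$F(x) = u^n y^2, \qquad F(X) := u^n f(X/u) \in \Q[X].$$
The roots of $F$ are the $u$-multiples of the roots of $f$, with the same multiplicities, so $F$ still has at least three zeros of odd multiplicity. If $n$ is even, writing $Z := u^{n/2}y$ gives $F(x) = Z^2$; if $n$ is odd, multiplying once more by $u$ and setting $G := uF$, $Z := u^{(n+1)/2}y$ gives $G(x) = Z^2$ with $G$ again having at least three roots of odd multiplicity (nonzero scaling preserves root multiplicities). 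In either case the classical $\Z$-integer form of Brindza's theorem concludes the argument.

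No genuine obstacle is anticipated: as the preamble to the lemma already indicates, the statement is an ``easy consequence'' of~\cite{B84}. The only point deserving care is the preservation of the hypothesis ``at least three zeros of odd multiplicity'' under the substitution $X \mapsto X/u$ and multiplication by nonzero rational constants, and this is immediate from the definition of multiplicity.
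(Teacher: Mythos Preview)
Your proposal is correct and matches the paper's approach exactly: the paper does not give a detailed proof but simply states that the lemma is an easy consequence of Brindza's effective theorem on $S$-integer solutions of hyperelliptic equations~\cite{B84}, which is precisely the reduction you carry out. Your first argument via $t=x/u\in\Z_S$ is the intended one, and your alternative scaling argument is also fine.
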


In the sequel we assume $c_1, e_1 \in \mathbb{Q}\setminus\{0\}$ and $c_0, e_0 \in \mathbb{Q}$.

\begin{lemma} \label{lem:1}
The polynomial $S_{a,b}^k (c_1 x + c_0)$ is not of the form $e_1 x^q + e_0$ with $q \geq 3$.
\end{lemma}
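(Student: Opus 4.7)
The plan is to differentiate the alleged identity, reduce it to an identity purely about the Bernoulli polynomial $B_k$, and then rule that identity out by a short coefficient comparison.

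First I would note that since $S_{a,b}^k(x)$ has degree $k+1$ and $c_1\neq 0$, the only way to have $S_{a,b}^k(c_1 x + c_0) = e_1 x^q + e_0$ is with $q=k+1$, so the hypothesis $q\geq 3$ translates into $k\geq 2$. I may therefore assume $k\geq 2$ throughout.

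Next, I would combine \eqref{Sabk} with the standard identity $B_{k+1}'(x)=(k+1)B_k(x)$ to obtain
$$\frac{d}{dx}S_{a,b}^k(x) = a^k B_k\!\left(x+\tfrac{b}{a}\right).$$
Differentiating the putative identity $S_{a,b}^k(c_1 x + c_0) = e_1 x^{k+1} + e_0$ and setting $y = c_1 x + c_0 + b/a$, $\alpha = c_0 + b/a$, one obtains
$$c_1 a^k B_k(y) = (k+1)e_1 \left(\frac{y-\alpha}{c_1}\right)^{k}.$$
Since $B_k(y)$ is monic of degree $k$, matching leading coefficients forces $(k+1)e_1 = c_1^{k+1}a^k$, and the displayed identity collapses to
$$B_k(y) = (y-\alpha)^k.$$

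The only remaining (and very easy) step is to rule out this last identity, which is the heart of the lemma. I would do it by two coefficient matches. The coefficient of $y^{k-1}$ in $B_k(y)$ equals $\binom{k}{1}B_1 = -k/2$, while on the right it equals $-k\alpha$; this forces $\alpha = 1/2$. Then the coefficient of $y^{k-2}$ on the left is $\binom{k}{2}B_2 = \binom{k}{2}/6$, whereas on the right (with $\alpha=1/2$) it is $\binom{k}{2}/4$. These are unequal for every $k\geq 2$, giving the desired contradiction. There is no real obstacle here; once Step~1 pins down $q=k+1$, the entire argument is a half-page calculation using only the most elementary properties of Bernoulli polynomials.
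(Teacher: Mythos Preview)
Your proof is correct, but the paper's argument is organized differently and is a bit shorter. The paper does not differentiate; it simply uses the precomputed coefficient $s_{k-1}$ from \eqref{rel:sk-1}: if $S_{a,b}^k(c_1 x + c_0) = e_1 x^{k+1} + e_0$ then $s_{k-1}=0$, which forces $6c_0'^2 - 6c_0' + 1 = 0$ with $c_0' = b/a + c_0 \in \mathbb{Q}$, and this quadratic has irrational roots. So the paper uses a single coefficient and a rationality obstruction. Your route instead passes through the clean reduction $B_k(y) = (y-\alpha)^k$ and then uses two coefficient matches: the $y^{k-1}$ match is equivalent to the paper's condition $s_k=0$ (giving $c_0'=\alpha=1/2$), and your $y^{k-2}$ match is the special case $c_0'=1/2$ of the paper's $s_{k-1}=0$ condition (indeed $6(1/2)^2-6(1/2)+1=-1/2\neq 0$). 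The advantage of your version is that it is self-contained and does not rely on the coefficient formulas \eqref{rel:sk}--\eqref{rel:sk-3} established earlier in the paper; the paper's version is a one-line consequence of those formulas.
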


\begin{lemma} \label{lem:2}
The polynomial $S_{a,b}^k (c_1 x + c_0)$ is not of the form 
$$e_1 D_{m} (x,\delta) + e_0,$$ 
where $D_{m} (x,\delta)$ is the $m$-th Dickson polynomial with $m > 4$ and $\delta \in \mathbb{Q}\setminus \{0\}$.
\end{lemma}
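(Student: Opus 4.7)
The plan is to split into two cases according to the parity of $k$. Suppose for contradiction that $S_{a,b}^k(c_1 x + c_0) = e_1 D_m(x, \delta) + e_0$; comparing degrees forces $m = k+1 \geq 5$, hence $k \geq 4$.

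\textbf{Case 1 ($k$ odd).} Write $k = 2v-1$ with $v = (k+1)/2 \geq 3$. The classical Dickson identity $D_{2v}(x,\delta) = D_v(x,\delta)^2 - 2\delta^v$ exhibits the right-hand side as $\phi(D_v(x,\delta))$ with $\phi(t) = e_1 t^2 + (e_0 - 2e_1 \delta^v)$, a nontrivial decomposition of degree sequence $(2,v)$. Decompositions of $S_{a,b}^k(c_1 x + c_0)$ are in degree-preserving bijection with those of $S_{a,b}^k(x)$ via the invertible linear substitution $c_1 x + c_0$, so by Theorem~\ref{thm:BPS} every nontrivial one is equivalent to a decomposition of degree sequence $(v,2)$; since equivalent decompositions share the same degree sequence and $v \geq 3 \neq 2$, this is an immediate contradiction.

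\textbf{Case 2 ($k$ even, hence $m$ odd).} The symmetry $B_n(1-y) = (-1)^n B_n(y)$ implies that $B_{k+1}(\tfrac{1}{2}+u)$ is an odd polynomial in $u$; the same is true of $D_m(u,\delta)$ since $m$ is odd. Matching the $x^k$-coefficient (which vanishes on the right because $D_m$ has no $x^{m-1}$-term) forces $c_0 = \tfrac{1}{2} - \tfrac{b}{a}$, and matching constants pins down $e_0$. After substituting $u = c_1 x$ and using the Dickson scaling $D_m(u/c_1, \delta) = c_1^{-m} D_m(u, \delta c_1^2)$, the leading-coefficient comparison normalizes the scalar to $1$, reducing the equation to the clean identity $B_{k+1}(\tfrac{1}{2}+u) = D_m(u, \tilde\delta)$ with $\tilde\delta = \delta c_1^2 \in \mathbb{Q}\setminus\{0\}$. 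Comparing the $u^{k-1}$-coefficient, using $B_2(\tfrac{1}{2}) = -\tfrac{1}{12}$ on the left and $-m\tilde\delta$ on the right, forces $\tilde\delta = k/24$; comparing the $u^{k-3}$-coefficient (legitimate for $k \geq 4$), using $B_4(\tfrac{1}{2}) = \tfrac{7}{240}$ on the left and $\tfrac{m(m-3)}{2}\tilde\delta^2$ on the right, then yields after simplification the relation $7(k-1) = 5k$, whose unique solution $k = 7/2$ is not an integer.

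The main technical step is the symmetry reduction in Case 2 that isolates the clean identity $B_{k+1}(\tfrac{1}{2}+u) = D_m(u, \tilde\delta)$; once this is in place, the coefficient comparisons are essentially routine, but rely crucially on $B_4(\tfrac{1}{2}) \neq 0$, which is precisely what makes the $u^{k-3}$-coefficient give usable information for every even $k \geq 4$.
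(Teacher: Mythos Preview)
Your proof is correct. Case~2 is essentially the paper's argument: the paper too forces $c_0' = b/a + c_0 = 1/2$ from the vanishing of the $x^k$-coefficient, then compares the $x^{k-1}$- and $x^{k-3}$-coefficients to obtain $c_1^2 = (m-1)/(24\delta)$ and $c_1^4 = 7(m-1)(m-2)/(2880\delta^2)$, whence $7(m-2)=5(m-1)$ and $m=9/2$ --- exactly your $k=7/2$ in disguise. Your preliminary substitution $u=c_1x$ and appeal to the oddness of $B_{k+1}(1/2+u)$ is a nice repackaging but not a new idea.

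Case~1, by contrast, is a genuinely different route: instead of computing coefficients you exploit the Dickson factorisation $D_{2v}(x,\delta)=D_v(x,\delta)^2-2\delta^v$ to produce a $(2,v)$ decomposition and invoke Theorem~\ref{thm:BPS} to rule it out. This is more structural and arguably more illuminating for odd $k$, but it costs you an external dependency (Theorem~\ref{thm:BPS}) that the paper's bare coefficient comparison avoids. Note also that the paper's argument is \emph{parity-blind}: nothing in the comparison of $s_k$, $s_{k-1}$, $s_{k-3}$ uses whether $k$ is even or odd, so your Case~2 computation in fact already handles all $k\ge 4$, and the split into cases --- while correct --- is unnecessary.
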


Before proving the lemmas above, we introduce the following notation. Let
\begin{equation}\label{Sabkc}
S_{a,b}^k (c_1 x + c_0) = s_{k+1} x^{k+1} + s_k x^k + \cdots + s_0,
\end{equation}
and $c_0 ' :=b/a + c_0$.
\
From \eqref{Sabk} we get
\begin{align}
s_{k+1}  & = \frac{a^k c_1 ^{k+1}}{k+1},\quad  s_k  =  \frac{a^k c_1 ^k}{2} (2c_0 ' - 1) \label{rel:sk}\\
s_{k-1}   & = \frac{a^k c_1 ^{k-1}}{12}k (6c_0 '^2 - 6c_0 ' + 1),\  k\geq 2, \label{rel:sk-1}
\end{align}
and for $k\geq 4$,
\begin{equation}
s_{k-3} =\frac{a^k c_1 ^{k-3}}{720}k(k-1)(k-2)(30c_0 '^4 -60c_0 '^3 + 30c_0 '^2 - 1). \label{rel:sk-3}
\end{equation}

\begin{proof}[Proof of Lemma \ref{lem:1}]
Suppose that $S_{a,b}^k (c_1 x + c_0) = e_1 x^q + e_0$, where $q=k+1 \geq 3$. Then $s_{k-1} = 0$ and from \eqref{rel:sk-1} we get $6c_0 '^2 - 6c_0 ' + 1= 0$, contradiction with $c_0 ' \in \mathbb{Q}$.
\end{proof}

\begin{proof}[Proof of Lemma \ref{lem:2}]
Suppose that $S_{a,b}^k (c_1 x + c_0) = e_1 D_{m} (x,\delta) + e_0$ with $k+1=m > 4$. Then
\begin{eqnarray}
s_{k+1}  & = &  e_1,  \quad s_k = 0, \label{rel:l2-2}\label{rel:l2-1}\\
s_{k-1} &  = & -e_1m \delta, \label{rel:l2-3}\\
s_{k-3} &= & \frac{e_1 (m -3) m \delta^2}{2}. \label{rel:l2-4} 
\end{eqnarray}
From \eqref{rel:sk} and \eqref{rel:l2-1} it follows that 
\begin{equation} \label{rel:l2-5}
e_1 = \frac{a^{m - 1} c_1 ^{m}}{m} \  \text{ and } \  c_0 ' = \frac{1}{2}.
\end{equation}
In view of \eqref{rel:sk-1}, by substituting \eqref{rel:l2-5} into \eqref{rel:l2-3}, we obtain
\begin{equation} \label{rel:l2-7}
c_1 ^2 = \frac{m -1}{24 \delta}.
\end{equation}
Similarly, by comparing the forms \eqref{rel:sk-3} and \eqref{rel:l2-4} of $s_{k-3}$ and by using \eqref{rel:l2-5}, we obtain
\begin{equation} \label{rel:l2-9}
c_1 ^4 = \frac{7(m -1)(m -2)}{2880 \: \delta^2}.
\end{equation}
After substituting \eqref{rel:l2-7} into \eqref{rel:l2-9}, we obtain $7(m -2) = 5(m -1)$, wherefrom $m = 9/2$, a contradiction. 
\end{proof}

One can see that the condition $m>4$ in Lemma \ref{lem:2} is necessary. Indeed,
\begin{align*}
S_{2,1}^2(x)&=\frac{4}{3}x^3-\frac{1}{3}x=\frac{4}{3}D_3\left(x,\frac{1}{12}\right), \\
S_{2,1}^3(x)&=2x^4-x^2=2D_4\left(x,\frac{1}{8}\right)-\frac{1}{16}.
\end{align*}

\section{Proofs of the Theorems}

\begin{proof}[Proof of Theorem \ref{Thm1}] One can rewrite the equation \eqref{Thm1eq} as
$$\frac{c^l}{l+1}\left(B_{l+1}\left(y+\frac{d}{c}\right)-B_{l+1}\left(\frac{d}{c}\right)\right)=\frac{1}{2}ax^2+\left(b-\frac{a}{2}\right)x,$$
that is
\begin{eqnarray*}
\frac{8ac^l}{l+1}\left(B_{l+1}\left(y+\frac{d}{c}\right)-B_{l+1}\left(\frac{d}{c}\right)\right)= (2ax+2b-a)^2-(2b-a)^2.
\end{eqnarray*}
Then the result follows from Lemma \ref{eff:1} and Lemma \ref{lem:hyper}.
\end{proof}

\begin{proof}[Proof of Theorem \ref{Thm3}] Using \eqref{eq:BPS} we easily see that
\begin{eqnarray*}
S_{a,b}^3 (x) & = & \frac{a^3}{4}\left(x+\frac{b}{a}-\frac{1}{2} \right)^4-\frac{a^3}{8}\left(x+\frac{b}{a}-\frac{1}{2} \right)^2\\
& + &\frac{a^4-16a^2b^2+32ab^3-16b^4}{64a}.
\end{eqnarray*}
Using the above representation, we rewrite the equation \eqref{Thm3eq} as 
$$64aS_{c,d}^l (y)+3a^4+16a^2b^2-32ab^3-16b^4=(X-2a^2)^2,$$
where $X=(2ax+2b-a)^2$. Then Lemma \ref{eff:1} and Lemma \ref{lem:hyper} complete the proof. 
\end{proof}

\begin{proof}[Proof of Theorem \ref{thm:BP}]  If the equation \eqref{eq:kl} has infinitely many integer solutions, then by Theorem \ref{thm:BT} it follows that 
$$S_{a,b}^k(a_1x+a_0)=\phi(f(x)), \quad S_{c,d}^l(b_1x+b_0)=\phi(g(x)),$$
where $(f(x), g(x))$ is a standard pair over $\Q$, $a_0, a_1, b_0, b_1$ are rationals with $a_1b_1\neq0$ and $\phi(x)$ is a polynomial with rational coefficients.

Assume that $h:=\deg \phi > 1$. Then Theorem \ref{thm:BPS} implies 
$$0<\deg f, \deg g\leq 2,$$ 
and since $k<l$ by assumption, we have $\deg f=1, \deg g=2$. Hence $k+1=h$ and $l+1=2h$, wherefrom $l=2k+1$. Since $k\geq 2$ and $l=2k+1$, it follows that $l\geq 5$. Since $\deg f=1$, there exist $f_1, f_0\in \Q$, $f_1\neq 0$, such that $S_{a, b}^k(f_1x+f_0)=\phi (x)$, so
$$S_{a, b}^k(f_1g(x)+f_0)=\phi (g(x))=S_{c, d}^l(b_1x+b_0)=S_{c, d}^{2k+1}(b_1x+b_0).$$
Since $g(x)$ is quadratic, by making the substitution $x\mapsto (x - b_0)/b_1$, we obtain that there exist $c_2, c_1, c_0\in \Q$, $c_2\neq 0$, such that
\begin{equation}\label{dec}
S_{a,b}^k(c_2x^2+c_1x+c_0)= S_{c, d}^{2k+1}(x).
\end{equation}
Since $\deg S_{a, b}^k=k+1\geq 3$ and $c_2\neq 0$, in \eqref{dec} we have a nontrivial decomposition of  $S_{c, d}^{2k+1}(x)$. From Theorem \ref{thm:BPS} it follows that there exists a linear polynomial $\ell(x)=Ax+B\in \C[x]$ such that 
$$c_2x^2+c_1x+c_0=A\left(x+\frac{d}{c}-\frac{1}{2}\right)^2+B.$$
Then clearly $A, B\in \Q$. From \eqref{dec} we obtain
$$S_{a, b}^k\left(A\left(x+\frac{d}{c}-\frac{1}{2}\right)^2+B\right)=S_{c, d}^{2k+1}(x).$$
wherefrom by linear substitution $x\mapsto x-{d}/{c}+{1}/{2}$ we obtain
\begin{equation}\label{theequation}
S_{a, b}^k(Ax^2+B)=S_{c, d}^{2k+1}\left(x-\frac{d}{c}+\frac{1}{2}\right).
\end{equation}
Thus, we have an equality of polynomials of degrees $2k+2\geq 6$. We calculate and compare coefficients of the first few highest monomials of the polynomials in \eqref{theequation}. The coefficients of the polynomial on the right-hand side are easily deduced by setting $c_1=1, c_0=-{d}/{c}+{1}/{2}$ into \eqref{rel:sk}, \eqref{rel:sk-1} and \eqref{rel:sk-3}. Therefrom it follows that if we denote 
$$S_{c,d}^{2k+1}\left(x-\frac{d}{c}+\frac{1}{2}\right)=r_{2k+2}x^{2k+2}+\cdots+ r_1x+r_0,$$
then we get
\begin{eqnarray*}
r_{2k+2} & = & \frac{c^{2k+1}}{2k+2},\\
r_{2k+1} & = & 0,\\
r_{2k} &  = & \frac{-c^{2k+1}(2k+1)}{24},\\
r_{2k-2} & = & \frac{7c^{2k+1}(2k+1)k(2k-1)}{2880}.
\end{eqnarray*}
On the other hand, the coefficients $s_{k+1}, s_k, s_{k-1}, s_{k-3}$ of the polynomial $S_{a, b}^k(x)$ can be found by setting $c_1=1, c_0=0$ into equations \eqref{rel:sk}, \eqref{rel:sk-1} and \eqref{rel:sk-3}. Since
$$S_{a, b}^k (Ax^2+B) =\sum_{m=0}^{k+1}s_m\sum_{i=0}^m\binom{m}{i}\left(Ax^2\right)^iB^{m-i},$$
it follows that if we denote
$$S_{a, b}^k (Ax^2+B)=t_{2k+2}x^{2k+2}+\cdots+t_1x+t_0,$$
then
\begin{eqnarray*}
t_{2k+2} & = & \frac{a^kA^{k+1}}{k+1},\\
t_{2k+1} &  = & 0,\\
t_{2k} & = & a^kA^kB+\frac{a^kA^k}{2}\left(2\left(\frac{b}{a}\right)-1\right),\\
t_{2k-2} & = & \frac{a^kk}{2}A^{k-1}B^2+ \frac{a^kk}{2}A^{k-1}B\left(2\left(\frac{b}{a}\right)-1\right)\\
& + &  \frac{a^kk}{12}A^{k-1}\left(6\left(\frac{b}{a}\right)^2 -6\left(\frac{b}{a}\right)+1\right).
\end{eqnarray*}
Next we compare coefficients. It must be $r_i=t_i$ for all $i=0, 1, \ldots, 2k+2$. Comparing the leading coefficients yields 
\begin{equation}
\label{eq:prva}
\frac{a^kA^{k+1}}{k+1}=\frac{c^{2k+1}}{2k+2},\qquad {\text{\rm so}}\qquad 2a^kA^{k+1}=c^{2k+1}.
\end{equation}
By comparing the coefficients of index $2k$ and using \eqref{eq:prva} we obtain
\begin{equation}
\label{druga}
\frac{b}{a}-\frac{1}{2}=-\frac{1}{12}A(2k+1)-B.
\end{equation}
By comparing the coefficients of index $2k-2$ and after simplifying we obtain
$$\frac{B^2}{2}+\frac{B}{2}\left(2\left(\frac{b}{a}\right)-1\right)+\frac{1}{12}\left(6\left(\frac{b}{a}\right)^2-6 \left(\frac{b}{a}\right)+1\right)=\frac{7(4k^2-1)A^2}{1440}.$$
From \eqref{druga} it follows that the last relation above can be transformed into 
\begin{eqnarray*}
\frac{B^2}{2}+B\left(-\frac{1}{12}A(2k+1)-B\right) & + & \frac{1}{2}\left(-\frac{1}{12}A(2k+1)-B\right)^2- \frac{1}{24}\\
& = & \frac{7A^2(4k^2-1)}{1440}.
\end{eqnarray*}
After simplification we obtain
$$A^2(k-3)(-2k-1)=15.$$
For $k\geq 3$, the expression on the left-hand side above is negative or zero, contradiction. If $k=2$, then $A^2=3$, which contradicts $A\in\Q$. 
Therefore, there are no rational coefficients $a, b, c, d, A$ and $B$ such that \eqref{theequation} is satisfied, wherefrom it follows that $\deg \phi = 1$.

If $\deg \phi=1$, then we have
$$S_{a,b} ^k (a_1 x + a_0) = e_1 f(x) + e_0, \qquad S_{c,d} ^l (b_1 x + b_0) = e_1 g(x) + e_0,$$
where $e_1, e_0 \in \mathbb{Q}$, $e_1\neq 0$. Clearly $\deg f = k+1$ and $\deg g = l+1$.

In view of the assumptions on $k$ and $l$, it follows that $(f(x),g(x))$ cannot be a standard pair over $\Q$  of the second kind, and with the exception of the case $(k,l)=(3,5)$, of the fifth kind either. If $(k,l)=(3,5)$, by using formula \eqref{rel:sk-1} for $k=3$, it is easy to see that $S_{a,b}^3(c_1x+c_0)=e_1(3x^4-4x^3)+e_0$ is not possible.

If $(f(x), g(x))$ is of the first kind, then one of the polynomials $S_{a,b} ^k (a_1 x + a_0)$ and $S_{c,d} ^l (b_1 x + b_0)$ is of the form $e_1 x^q + e_0$ with $q \geq 3$. This is impossible by Lemma \ref{lem:1}.

If $(f(x),g(x))$ is a standard pair of the third or fourth kind, then we have that either $S_{c,d} ^l (b_1 x + b_0) = e_1 D_{m} (x,\delta) + e_0$ with $m = l+1 \geq 5$ and $\delta \in \mathbb{Q}\setminus \{0\}$, which contradicts Lemma \ref{lem:2}, or $k=2, l=3$. In the latter case, Theorem \ref{Thm3} gives an effective finiteness statement. 
\end{proof}

{\bf Acknowledgements.} The authors are grateful to the referee for her/his careful reading and helpful remarks.

The research was supported in part by the Hungarian Academy of Sciences, by the OTKA grant K75566, and by the
 T\'AMOP 4.2.1./B-09/1/KONV-2010-0007 project implemented through the New Hungary Development 
Plan co-financed by the European Social Fund and the European Regional Development Fund. Dijana Kreso
was supported by the Austrian Science Fund (FWF): W1230-N13 and NAWI Graz.

\bibliographystyle{amsplain}
\bibliography{Dijana3}

\end{document}